\newcommand{\R}{\mathbb{R}} 
\newcommand{\veh}{Q} 
\newcommand{\ctrl}{u}
\newcommand{\cset}{\mathcal{U}}
\newcommand{\cfset}{\mathbb{U}}
\newcommand{\targetset}{\mathcal L}
\newcommand{\dz}{\mathcal{Z}} 
\newcommand{\costate}{\lambda}
\newcommand{\st}{K}
\newcommand{\ic}{l}
\newcommand{\frsvf}{W}
\newcommand{\frs}{\mathcal \frsvf}
\newcommand{\state}{x}
\newcommand{\brsvf}{V}
\newcommand{\brs}{\mathcal \brsvf}
\newcommand{\ham}{H}
\newcommand{\reldyn}{g}
\newcommand{\PCS}{P} 
\newcommand{\Nveh}{\mathcal N}
\newcommand{\OI}{I}
\newcommand{\our}{\mathcal S}
\newcommand{\PCSOIJ}{P_{\OI j}} 
\newcommand{\vdyn}{f}
\newtheorem{defn}{Definition}
\newtheorem{prop}{Proposition}
\newtheorem{thm}{Theorem}
\newtheorem{rmk}{Remark}
\title{\LARGE \bf 
A Hybrid Framework for Multi-Vehicle Collision Avoidance}
\author{ Aparna Dhinakaran*, Mo Chen*, Glen Chou, Jennifer C. Shih, Claire J. Tomlin.
\thanks{This research is supported by ONR under the Embedded Humans MURI (N00014-16-1-2206).}
\thanks{* Both Authors contributed equally to this work. All authors are with the Department of Electrical Engineering and Computer Sciences, University of California, Berkeley. \{aparnadhinak,mochen72,gchou,cshih,tomlin\}@berkeley.edu}
}
\begin{document}
\maketitle
\thispagestyle{empty}
\pagestyle{empty}

\begin{abstract}
With the recent surge of interest in UAVs for civilian services, the importance of developing tractable multi-agent analysis techniques that provide safety and performance guarantees has drastically increased. Hamilton-Jacobi (HJ) reachability has successfully provided these guarantees to small-scale systems and is flexible in terms of system dynamics. However, the exponential complexity scaling of HJ reachability with respect to system dimension prevents its direct application to larger-scale problems where the number of vehicles is greater than two. In this paper, we propose a collision avoidance algorithm using a hybrid framework for $N+1$ vehicles through higher-level control logic given any $N$-vehicle collision avoidance algorithm. Our algorithm conservatively approximates a guaranteed-safe region in the joint state space of the $N+1$ vehicles and produces a safety-preserving controller. In addition, our algorithm does not incur significant additional computation cost. We demonstrate our proposed method in simulation.
\end{abstract}

\section{Introduction}

With the increasingly widespread adoption of unmanned aerial vehicles (UAVs), it is important to understand interactions in multi-vehicle systems; indeed, government agencies such as the Federal Aviation Administration and the National Aeronautics and Space Administration are urgently working on UAV-related regulations \cite{FAA13, Kopardekar16}.

Multi-agent systems are characterized by the asymmetric goals of and cooperation among the involved agents. This setting captures many real-world scenarios, and so these systems has been studied extensively in the past, with a particular focus on collision-avoidance. However, most prior lines of work introduce simplifying assumptions to obtain satisfactory results. For instance, \cite{Fiorini98, Vandenberg08} assume that the agents will employ certain simple control strategies which induce velocity obstacles that must be avoided in order to maintain safety. \cite{Chuang07, Saber02} assume that multiple agents maintain formation to travel along pre-specified trajectories. Similarly, other works such as \cite{Ahmadzadeh2009, Beard2003, Bellingham, Feng-LiLian2002, Frazzoli2002a} require strong assumptions on the dynamics of the system. While these approaches capture important behavior of multi-agent systems for many realistic scenarios, they do not offer general safety guarantees important in safety-critical systems.

Attempts at addressing this more general setting have utilized optimal control and differential game approaches. These are ideal tools due to their flexibility with respect to the system dynamics. In particular, the HJ formulation of differential games has been studied extensively and successfully applied to small-scale problems such as automatic aerial refueling \cite{Ding08}, pairwise collision avoidance \cite{Mitchell05}, reach-avoid games \cite{Huang11, Vaisbord88}, and many others. However, despite the many successes of HJ reachability, the computational complexity of HJ-based methods scales exponentially with the number of vehicles in the system, making their direct application to multi-vehicle problems intractable. Thus, utilizing differential games to analyze larger-scale problems usually requires structural assumptions on the roles of different agents or on the traffic rules of the system. For example, \cite{Fisac15b, Su14, Tanimoto78} discuss various classes of three-player differential game with different assumptions on the role of each agent in non-cooperative settings. For even larger systems, \cite{Chen15, Chen15b, Chen14, Chen17, Lin13} provide promising results when varying assumptions in the form of traffic rules and vehicle priorities can be made.

In this paper, we attempt to resolve the above two issues simultaneously; we attempt to solve the problem of collision avoidance in unstructured flight with low computational overhead. Our proposed algorithm achieves collision avoidance for $N+1$ vehicles via black box use of any algorithm for $N$-vehicle collision avoidance. In particular, our algorithm employs a hybrid systems approach to provide a joint cooperative control strategy and conservatively approximates a guaranteed-safe region in the joint state space of the $N+1$ vehicles. To do this, our algorithm treats one of the vehicles as an ``outsider'' vehicle, and then quantifies the set of safe states for the outsider vehicle given the states of the other $N$ vehicles. To preemptively account for cases in which no safe states can be found, we formulate our solution to allow sufficient time for any vehicle to remove itself from the joint system. 

Our paper is organized as follows:
\begin{itemize}
    \item In Section \ref{sec:formulation}, we formulate the multi-vehicle collision avoidance problem, provide a summary of HJ reachability from \cite{Chen2016, Mitchell05}, and state our goals.
    \item In Section \ref{sec:method}, we describe our hybrid framework for achieving  $N+1$-vehicle collision avoidance given some $N$-vehicle collision avoidance algorithm. We prove that our proposed hybrid framework conservatively approximates the unsafe regions of any vehicle given the other $N$ vehicles, and that the cooperative controllers synthesized by the hybrid framework guarantee safety whenever possible.
    \item Finally, in Section \ref{sec:sim}, we demonstrate our proposed method in a four-vehicle simulation. (Prior HJ reachability-based approaches for guaranteeing safety were limited to two or three vehicles \cite{Chen2016, Mitchell05}).
\end{itemize}


\section{Problem Formulation}
\label{sec:formulation}

Consider $N+1$ vehicles, denoted $\veh_i, i = 1, 2, \ldots, N+1$, whose dynamics are described by the system of ordinary differential equation (ODE)
\begin{equation}
\label{eq:vdyn} 
\dot \state_i = f_i(\state_i, \ctrl_i), \quad \ctrl_i \in \cset_i, \quad i = 1,\ldots, N+1
\end{equation}

\noindent where $\state_i \in \R^{n_i}$ is the state of the $i^\text{th}$ vehicle $\veh_i$, and $\ctrl_i$ is the control of $\veh_i$. Each vehicle $\veh_i$ will have some objective, such as getting to a set of goal states. Whatever the objective may be, each vehicle $\veh_i$ must at all times avoid the \textit{danger zone} $\dz_{ij}$ with respect to each of the other vehicles $\veh_j, j \neq i$. These danger zones represent undesirable relative configurations between $\veh_i$ and $\veh_j$ (such as collisions). 

If possible and desired, each vehicle uses a ``goal-satisfaction controller'' that helps complete its objective. However, sometimes a ``safety controller'' must be used in order to prevent the vehicle from entering danger zones. Since these danger zones $\dz_{ij}$ are sets of joint configurations, it is convenient to derive the set of relative dynamics between every vehicle pair from the dynamics of each vehicle specified in \eqref{eq:vdyn}. Let the relative dynamics between $\veh_i$ and $\veh_j$ be specified by the following ODE.
\begin{equation}
\label{eq:rdyn} 
\begin{aligned}
\dot{x}_{ij} &= g_{ij}(x_{ij}, \ctrl_i, \ctrl_j) \\
\ctrl_i &\in \cset_i, \ctrl_j \in \cset_j \quad i, j = 1, \ldots, N+1, i\neq j 
\end{aligned}
\end{equation}

We assume the functions $f_i$ and $g_{ij}$ are uniformly continuous, bounded, and Lipschitz continuous in arguments $\state_i$ and $x_{ij}$ respectively for fixed $\ctrl_i$ and $(\ctrl_i, \ctrl_j)$ respectively. In addition, the control functions $\ctrl_i(\cdot)\in\cfset_i$ are drawn from the set of measurable functions\footnote{A function $f:X\to Y$ between two measurable spaces $(X,\Sigma_X)$ and $(Y,\Sigma_Y)$ is said to be measurable if the preimage of a measurable set in $Y$ is a measurable set in $X$, that is: $\forall V\in\Sigma_Y, f^{-1}(V)\in\Sigma_X$, with $\Sigma_X,\Sigma_Y$ $\sigma$-algebras on $X$,$Y$.}.


\subsection{Background}
\subsubsection{Hamilton-Jacobi Reachability\label{sec:HJI}}
The main goal of reachability analysis is to characterize the set of states either from which the system can be driven into some target set, or that the system can reach from some target set. Specifically, in the backward reachability problem, we would like to determine the backward reachable set (BRS) $\brs(T)$ of time horizon $T$ representing states from which the system can be driven into some potentially \textit{time-varying} target set $\targetset(t), t\in[-T, 0]$. In the forward reachability problem, we would like to determine the forward reachable set (FRS) $\frs(T)$ of time horizon $T$ representing states that a system can reach starting from the target set $\targetset$.
In general, $\brs(t)$ can be obtained from the viscosity solution \cite{Crandall84} to the HJ PDE \eqref{eq:brs_pde}. Although there are many equivalent methods of obtaining the BRS \cite{Bokanowski10, Fisac15, Mitchell05}, we will use the formulation in \cite{Fisac15} which solves the time-varying reachability problem without augmenting the state space with time.
\begin{equation}
    \label{eq:brs_pde}
    \begin{aligned}
    \min \{D_t \brsvf(t,\state) + \ham(\state, \nabla \brsvf(t,\state)), \ic(t, \state) - \brsvf(t, \state) \} = 0 \\ 
    \brsvf(T,\state) = \ic(T, \state) \\
    t\in [0,T]
    \end{aligned}
\end{equation}
\noindent where the target set is represented by the sub-zero level set of the implicit surface function $\ic(\state)$: $\targetset = \{\state: \ic(\state) \le 0\}$. The value function $\brsvf(T, \state)$ is the implicit surface function representing the BRS: $\brs(T) = \{\state: \brsvf(T, \state) \le 0\}$.

In a similar fashion, the FRS $\frs(T)$ can be computed with the following HJI PDE:
\begin{equation}
\label{eq:frs_pde}
\begin{aligned}
D_t \frsvf(t,\state) + \ham(\state, \nabla \frsvf(t,\state)) = 0 \\ 
\frsvf(0,\state) = \ic(\state) \\
t\in [0,T]
\end{aligned}
\end{equation}

\noindent where the target set $\targetset = \{\state: \ic(\state) \le 0\}$ specifies the initial set of states, the FRS $\frs(T) = \{\state: \frsvf(T, \state) \le 0\}$ is given by the value function, and the Hamiltonian $\ham$ depends on the role of the control inputs.

\subsubsection{Potential Conflict\label{sec:SL}}
Intuitively, two vehicles are in potential conflict when they are likely to enter each other's danger zones. This indicates when a vehicle will employ the safety controller as opposed to using to the goal satisfaction controller. This notion is formally stated in the following definition of BRS $\brs^{PC}_{ij}(T)$ (PC for potential conflict), Hamiltonian, and optimal avoidance control for vehicle $\veh_i$:

\begin{defn}
    \label{defn:pot_conf}
    The \textbf{Potential Conflict Set}, $\PCS_{ij}$, is the set of relative states $\state_{ij}$ such that if $\state_i \notin \PCS_{ij}$, then $\veh_i$ is guaranteed to be able to avoid $\dz_{ij}$  with $\veh_j$, no matter what non-anticipative control strategy $\veh_j$ uses \cite{Mitchell05}.
    
    \begin{equation}
    \label{eq:pairwiseCA_brs}
    \begin{aligned}
    &\brs^{PC}_{ij}(T) = \{y: \forall \ctrl_i, \exists \ctrl_j, \exists t \in [0, T], \\
    &\quad\state_{ij}(\cdot) \text{ satisfies \eqref{eq:rdyn}}, \state_{ij}(t) = y \in \dz_{ij} \}
    \end{aligned}
    \end{equation}
    \begin{equation}
    \ham^{PC}_{ij}(\state_{ij}, \costate) = \max_{u_i} \min_{u_j} \costate \cdot \reldyn_{ij}(\state_{ij}, \ctrl_i, \ctrl_j)
    \end{equation}
    \begin{equation}
    \label{eq:ctrl_CA}
    \ctrl_i^{PC}(\state_{ij}) = \arg \max_{u_i} \min_{u_j} \nabla\brsvf_{ij}^{PC}(\state_{ij}) \cdot \reldyn_{ij}(\state_{ij}, \ctrl_i, \ctrl_j)
    \end{equation}
    
    \noindent where $\brsvf_{ij}^{PC}(\state_{ij}) := \lim_{T\rightarrow\infty}\brsvf_{ij}^{PC}(\state_{ij}, T)$.
     
    \begin{equation}
    \PCS_{ij} = \{\state_{i}: \brsvf_{ij}^{PC}(\state_{ij}) \le \st\}
    \end{equation}
\end{defn}
%

When $N$ vehicles are in potential conflict, then an $N$-vehicle collision avoidance algorithm is needed to ensure their safety. 

%
\begin{rmk}
    Unlike $\brs^{PC}_{ij}$, $\PCS_{ij}$ is defined in the space of $\state_i$ instead of $\state_{ij}$. This is done for convenience in latter parts of the paper.
\end{rmk}


%



\subsection{Goals of this Paper \label{sec:goals}}

Consider a scenario in which at time $t=0$, a set of $N+1$ vehicles are in potential conflict with each other; in other words, the ``total conflict size\footnote{The notion of conflict size via determining potential conflicts among multiple vehicles will be formally defined later in Definition \ref{defn:conf_size}.}'' is $N+1$. Since the $N$-vehicle collision avoidance algorithm can guarantee safety for $N$ of the $N+1$ vehicles, we can choose $N$ vehicles to be handled by such a collision avoidance algorithm, and derive conditions on the remaining vehicle under which the entire system of $N+1$ vehicles can remain safe. To account for the situation in which safety of the entire system is impossible, we will assume that within an ``exit time'' $T_e$, one of the vehicles will be able to remove itself from the system.

Given the vehicle dynamics in \eqref{eq:vdyn}, some joint objective, a previously chosen $N$-vehicle collision avoidance algorithm, the derived relative dynamics in \eqref{eq:rdyn}, and the danger zones $\dz_{ij}$, we propose a cooperative safety control strategy that performs the following:

\begin{enumerate}
\item Detect potential conflicts based on the joint configuration of $N+1$ vehicles, and in particular determine the size of conflict;
\item Conservatively determine without significant computation cost when all $N+1$ vehicles are guaranteed to be safe, and provide safety controllers for all $N+1$ vehicles in this case;
\item Determine when safety cannot be guaranteed, in which case a vehicle needs to be removed from the system.
\end{enumerate}

We will show that our analysis provides with low overhead a conservative approximation of various slices of a high-dimensional BRS representing the unsafe region in the joint state space of all $N+1$ vehicles. Such a high-dimensional BRS is intractable to compute directly. 
%

\section{Methodology}
\label{sec:method}

In this section, we illustrate our proposed hybrid framework that ensures safety of $N+1$ vehicles. We will first define terminologies essential to our hybrid framework. 


\begin{defn}
    \textbf{$T_e$-buffer set.}
    \begin{equation}
    \begin{aligned}
    &\brs_{ij}^{E} = \{y: \exists \ctrl_i, \exists \ctrl_j,   \exists t \in [0, T_e], \\
    &\quad\state_{ij}(\cdot) \text{ satisfies \eqref{eq:rdyn}}, \state_{ij}(t) = y \in \dz{ij}\}
    \end{aligned}
    \end{equation}
\end{defn}

The purpose of the $T_e$-buffer set is to account for the situation in which our method indicates that $N+1$-vehicle potential conflicts cannot be guaranteed to be resolved safely. The $T_e$-buffer set allows a vehicle a duration of $T_e$ to remove itself safely from the system; this can be done by maneuvers such as exiting the altitude range. To provide the last-resort option of removing a vehicle from the system, we specify $\dz_{ij}$ to be the $T_e$-buffer set. 

Therefore, if $\state_i$ is on the boundary of $\PCS_{ij}$, then $\veh_i$ is said to be ``in potential conflict'' with $\veh_j$, and the control $\ctrl_i^{PC}(\state_{ij})$ must be used by $\veh_i$ in order to ensure that the relative state of $\veh_i$ and $\veh_j$ does not enter the $T_e$-buffer set.

\begin{defn}
    \label{defn:conf_size}
    \textbf{Conflict size.}
    Consider a graph in which each vehicle $\veh_i$ is a node. We connect two nodes $\veh_i,\veh_j$ with an edge if and only if $\veh_i$ is in potential conflict with $\veh_j$, vice versa, or both. The number of nodes with at least one edge in the resulting graph is defined as the conflict size.
\end{defn}



%
\subsection{Safety of the Outsider}
In this paper, we assume that at $t=0$, the conflict size first becomes at least $N$. When this happens, $N$ of the vehicles will perform avoidance maneuvers according to the $N$-vehicle collision avoidance algorithm. Let $T_r$ be the time it takes for the $N$-vehicle collision avoidance algorithm to reduce the conflict size \textit{among these $N$ vehicles} by at least $1$. We call $T_r$ the ``time of resolution.'' Note that $T_r$ needs not be known \textit{a priori}, and may be determined at $t=0$.

\subsubsection{Unsafe Region}
We let $\Nveh$ denote the set of indices of the $N$ vehicles performing avoidance maneuvers according to the $N$-vehicle collision avoidance algorithm and let $\OI$ denote the index of the ``outsider vehicle.'' The $N$ vehicles whose indices are in the set $\Nveh$ are guaranteed to be safe with respect to each other. For the safety of all $N+1$ vehicles, it remains to ensure that $\veh_\OI$, the ``outsider vehicle'' not in $\Nveh$, is also able to avoid the $T_e$-buffer set of all other vehicles. The following proposition provides an intuitive condition under which the safety of all $N+1$ vehicles can be guaranteed.

\begin{prop}
    \label{prop:np1}
    If $\veh_i, i\in\Nveh$ are in potential conflict and engaged in an $N$-way collision avoidance, then all $N+1$ vehicles are guaranteed to be able to avoid the $T_e$-buffer set $\brs_{ij}^{E}(T_e)$ if $\veh_\OI$ is in potential conflict with a maximum one of the other $N$ vehicles during $t \in [0, T_r]$.
\end{prop}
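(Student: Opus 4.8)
The plan is to reduce the joint $N+1$-vehicle safety requirement to a collection of pairwise obligations and to discharge them in two groups: the pairs contained entirely in $\Nveh$, and the pairs consisting of the outsider $\veh_\OI$ with some $\veh_j$, $j\in\Nveh$. Since avoiding every pairwise buffer set $\brs_{ij}^{E}(T_e)$ is exactly the joint safety property, it suffices to show that no relative state enters its own buffer set. The first group is immediate: by hypothesis the vehicles indexed by $\Nveh$ are engaged in the assumed $N$-vehicle collision avoidance algorithm, which by its defining guarantee keeps each pair $(i,j)$ with $i,j\in\Nveh$ out of $\brs_{ij}^{E}(T_e)$. This isolates $\veh_\OI$ as the only object of real analysis, so I would spend the argument on the outsider's pairwise interactions with the $N$ vehicles.

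For the outsider I would argue instant by instant over $t\in[0,T_r]$. Fixing $j\in\Nveh$, Definition \ref{defn:pot_conf} identifies $\dz_{\OI j}=\brs_{\OI j}^{E}(T_e)$ as the reachability target of the potential-conflict computation, so it is contained in $\PCS_{\OI j}=\{\state_\OI:\brsvf_{\OI j}^{PC}(\state_{\OI j})\le\st\}$; consequently $\state_{\OI j}$ can reach the buffer set only after first crossing the boundary $\{\brsvf_{\OI j}^{PC}=\st\}$, i.e.\ only after $\veh_\OI$ becomes in potential conflict with $\veh_j$. The hypothesis enters precisely here: at each instant the outsider is in potential conflict with at most one vehicle, so whenever $\state_{\OI j}$ touches its PC boundary, $\veh_j$ is the unique vehicle demanding attention and $\veh_\OI$ may commit its single control to the optimal avoidance input $\ctrl_\OI^{PC}(\state_{\OI j})$ of \eqref{eq:ctrl_CA}. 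Because $\ctrl_\OI^{PC}$ is the maximin control associated with $\brsvf_{\OI j}^{PC}$, the standard Hamilton--Jacobi invariance property keeps $\brsvf_{\OI j}^{PC}(\state_{\OI j})$ at or above its current value against every $\ctrl_j$, so $\state_{\OI j}$ never pierces $\dz_{\OI j}$; meanwhile each other $\veh_m$, $m\neq j$, still satisfies $\brsvf_{\OI m}^{PC}>\st$ and so sits strictly outside its own buffer set. Chaining these pairwise arguments across the temporally separated single conflicts shows that $\veh_\OI$ avoids all $N$ buffer sets throughout $[0,T_r]$, and the invariance certificate extends the guarantee forward as long as the appropriate controller is applied.

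The main obstacle is the single-input bottleneck that makes naive composition of pairwise guarantees fail: the outsider has one control $\ctrl_\OI$ that cannot in general execute two distinct optimal avoidance maneuvers at once, which is exactly why the pairwise HJ guarantees do not simply superpose. The entire argument turns on verifying that the ``at most one potential conflict at a time'' hypothesis removes this obstruction by temporally separating the active conflicts, together with checking that the margin supplied by $\st$ (and by the $T_e$ buffer itself) ensures that a vehicle merely drifting toward, but not yet in, conflict cannot reach its buffer set before it becomes the sole active conflict. Making the instant-by-instant hand-off of the safety controller rigorous --- in particular ruling out a relative state that lies on two PC boundaries simultaneously, and confirming that the invariance property survives the switch between successive single conflicts --- is the step I expect to require the most care.
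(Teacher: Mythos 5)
Your proposal is correct and takes essentially the same approach as the paper's proof: the assumed $N$-vehicle algorithm discharges every pair inside $\Nveh$, and the at-most-one-conflict hypothesis lets $\veh_\OI$ commit its single control input to the pairwise optimal avoidance control $\ctrl_\OI^{PC}$ of \eqref{eq:ctrl_CA}, whose HJ reachability guarantee keeps the relative state out of the corresponding $T_e$-buffer set. The paper disposes of the outsider with the same two cases (no conflict, or exactly one conflict handled by the pairwise optimal control); your additional detail on the containment $\dz_{\OI j}\subseteq\PCS_{\OI j}$ and on the invariance hand-off between successive single conflicts simply makes explicit what the paper leaves implicit.
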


\textit{Proof:} Since the $N$-way collision avoidance algorithm guarantees that no vehicle $\veh_i, i\in\Nveh$ enter each other's $T_e$-buffer set $\brs_{ij}^{E}(T_e)$, it remains to ensure that the outsider vehicle $\veh_\OI$ does not enter any of the other vehicle's $T_e$-buffer set. If $\veh_\OI$ is not in potential conflict with any vehicle, then it is safe. And if $\veh_\OI$ is in potential conflict with exactly one other vehicle, say $\veh_1$ without loss of generality, then applying the optimal control $\ctrl_\OI^{APC}(\state_{\OI1})$ would guarantee that $\brs_{\OI1}^{E}(T_e)$ is not entered. \hfill $\blacksquare$

Suppose that to resolve the $N$-way conflict, a $N$-way collision avoidance algorithm stipulates some trajectories $\state_i(t), i\in\Nveh, t \in [0, T_r]$. To conservatively approximate the safe regions of $\veh_\OI$, we must ensure that the outsider vehicle cannot be in potential conflict with more than one vehicles until the $N$-vehicle conflict is resolved. We now define the region in which $\veh_\OI$ is in potential conflict with multiple vehicles.

\begin{defn}
    \textbf{Outsider unsafe region.}
    \begin{equation}
    \label{eq:our}
    \begin{aligned}
    \our_\OI(t) = \bigcup_{i,j \in \Nveh, i\neq j} \left(\bar\PCS_{\OI i}(t) \bigcap  \bar\PCS_{\OI j}(t)\right)   
    \end{aligned}
    \end{equation}
\end{defn}

Note that $\our_\OI(t)$ is a set of states in the space of $\state_i$, $\R^{n_i}$, and that $t \in [0, T_r]$.

If we can guarantee that $\veh_\OI$ never enters $\our_\OI(t)$ for $t \in [0, T_r]$, then the following control strategy would keep $\veh_\OI$ from entering the $T_e$-buffer sets of all other vehicles:

\begin{equation}
\label{eq:one_conf_ctrl}
\ctrl_\OI(\state_\OI, t) \in
\begin{cases}
\{\ctrl_\OI^{APC}(\state_{\OI j}(t))\} \text{ if } \state_\OI \in \bar\PCS_{\OI j}(t) \\
\cset \text{ otherwise}
\end{cases}
\end{equation}

For convenience, we will denote the set of control functions satisfying the control given in \eqref{eq:one_conf_ctrl} for $t \in [0, T_r]$ to be $\cfset_\OI^\text{SC}$ (SC for single conflict). Thus if $\ctrl_\OI(\cdot) \in \cfset_\OI^\text{SC}$, \textit{and} $\state_\OI(t) \notin \our_\OI(t) ~ \forall t\in[0,T_r]$, then  $\veh_\OI$ would be guaranteed to not enter into the $T_e$-buffer zones of any other vehicle. We now proceed to describe how to ensure the above conditions are met.

\subsubsection{Optimally Avoiding the Unsafe Region}
For $\veh_\OI$ to avoid $\our_\OI(t)$, we must ensure that $\veh_\OI$ avoids the set of states that inevitably leads to $\our_\OI(t), t \in [0, T_r]$. This set is captured by the following BRS:
\begin{defn}
    \textbf{Minimal BRS from $\our_\OI(t)$.}
    \begin{equation}
    \label{eq:brs_min}
    \begin{aligned}
    &\brs_\OI^-(t) = \{y: \forall \ctrl_\OI(\cdot) \in \cfset_\OI^\text{SC}, \exists s \in [t, T_r], \\
    &\quad\state_\OI(\cdot) \text{ satisfies \eqref{eq:vdyn} }, \state_\OI(s) = y \in \our_\OI(s)\}
    \end{aligned}
    \end{equation}
\end{defn}
This BRS can be computed using the $\our_\OI(t)$ as a time-varying target by solving \eqref{eq:brs_pde} with the Hamiltonian
\begin{equation}
\ham_\OI^-(t, \state_\OI, \costate) =
\begin{cases}
\costate \cdot \vdyn(\state_\OI, \ctrl_\OI^{APC}(\state_{\OI j}(t))) \text{ if } \state_\OI \in \bar\PCS_{\OI j}(t)\\
\max_{\ctrl_\OI \in \cset_\OI} \costate \cdot \vdyn(\state_\OI, \ctrl_\OI) \text{ otherwise}
\end{cases}
\end{equation}
\begin{equation}
\label{eq:ctrl_minus}
\ctrl_\OI^-(\state_\OI, t) = \arg \max_{\ctrl_\OI \in \cset_\OI} \nabla \brsvf_\OI^-(\state_\OI, t) \cdot \vdyn(\state_\OI, \ctrl_\OI)
\end{equation}
%
\begin{prop}
    \label{prop:minimal}
     The following controller guarantees that if $\state_\OI(0) \notin \brs_\OI^-(0)$, then $\state_\OI(t) \notin \brs_\OI^-(t) ~ \forall t \in [0, T_r]$.
    
    \begin{equation}
    \label{eq:two_conf_ctrl}
    \ctrl_\OI(\state_\OI, t) \in
    \begin{cases}
    \{\ctrl_\OI^-(\state_\OI, t) \text{ in \eqref{eq:ctrl_minus}}\} \text{ if } \state_\OI \in \partial\brsvf_\OI^-(\state_\OI, t) \\
    \cset_\OI \text{ otherwise}
    \end{cases}
    \end{equation}
\end{prop}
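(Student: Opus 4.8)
The plan is to treat the value function $\brsvf_\OI^-$ as a barrier certificate for the complement of $\brs_\OI^-(t)$ and run the standard Hamilton--Jacobi invariance argument. First I would make explicit the level-set characterization implied by \eqref{eq:brs_pde}: since $\brsvf_\OI^-$ is the viscosity solution of \eqref{eq:brs_pde} with Hamiltonian $\ham_\OI^-$ and the time-varying target $\our_\OI(\cdot)=\{x:\ic(t,x)\le 0\}$, the minimal BRS is its sub-zero level set, $\brs_\OI^-(t)=\{x:\brsvf_\OI^-(x,t)\le 0\}$, with boundary $\{x:\brsvf_\OI^-(x,t)=0\}$. In particular $\our_\OI(t)\subseteq\brs_\OI^-(t)$, so keeping the state out of $\brs_\OI^-(t)$ automatically keeps it out of the target. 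The claim then reduces to showing that, starting from $\brsvf_\OI^-(\state_\OI(0),0)>0$, the feedback \eqref{eq:two_conf_ctrl} maintains $\brsvf_\OI^-(\state_\OI(t),t)>0$ on $[0,T_r]$.

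The core step is the barrier inequality on $\{\brsvf_\OI^-=0\}$. Differentiating $V(t):=\brsvf_\OI^-(\state_\OI(t),t)$ along a closed-loop trajectory gives $\dot V = D_t\brsvf_\OI^- + \nabla\brsvf_\OI^-\cdot\vdyn(\state_\OI,\ctrl_\OI)$. On the boundary the feedback \eqref{eq:two_conf_ctrl} selects $\ctrl_\OI^-$ from \eqref{eq:ctrl_minus}, which is exactly the maximizer of $\nabla\brsvf_\OI^-\cdot\vdyn$, so $\dot V = D_t\brsvf_\OI^- + \max_{\ctrl_\OI\in\cset_\OI}\nabla\brsvf_\OI^-\cdot\vdyn = D_t\brsvf_\OI^- + \ham_\OI^-$. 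At boundary points lying outside the target we have $\ic-\brsvf_\OI^->0$, so the active branch of the variational inequality \eqref{eq:brs_pde} is the propagation branch $D_t\brsvf_\OI^- + \ham_\OI^- = 0$ rather than the freezing branch $\ic-\brsvf_\OI^-=0$; hence $\dot V\ge 0$ there. Thus $\{\brsvf_\OI^-=0\}$ is a semipermeable barrier: a trajectory that starts in $\{\brsvf_\OI^->0\}$ cannot cross into $\{\brsvf_\OI^-<0\}$, because on the unconstrained interior $\state_\OI(\cdot)$ is continuous and must first reach the boundary, where $\ctrl_\OI^-$ activates and forbids $V$ from decreasing through zero.

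To obtain the exact set statement $\state_\OI(t)\notin\brs_\OI^-(t)$ --- including ruling out the boundary itself --- I would close with a dynamic-programming/contradiction argument. Suppose $\tau\in(0,T_r]$ is the first time with $\state_\OI(\tau)\in\brs_\OI^-(\tau)$. By the definition \eqref{eq:brs_min} of the minimal BRS, from $\state_\OI(\tau)$ every admissible control in $\cfset_\OI^\text{SC}$ drives the state into $\our_\OI(s)$ for some $s\in[\tau,T_r]$; in particular the continuation of \eqref{eq:two_conf_ctrl} does. But the verification step above shows that from the positive level set the feedback keeps $V\ge 0$ and hence (using $\our_\OI\subseteq\brs_\OI^-$) avoids $\our_\OI$ on all of $[0,T_r]$, and therefore on $[\tau,T_r]$ --- a contradiction. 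Hence no such $\tau$ exists and $\state_\OI(t)\notin\brs_\OI^-(t)$ for every $t\in[0,T_r]$.

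I expect the genuine difficulty to be regularity and well-posedness rather than the geometry. The viscosity solution $\brsvf_\OI^-$ is in general only Lipschitz, so $D_t\brsvf_\OI^-$ and $\nabla\brsvf_\OI^-$ need not exist classically along the barrier, and the switching feedback \eqref{eq:two_conf_ctrl} is discontinuous, so the closed-loop ODE must be read in a generalized (e.g.\ Filippov) sense. The rigorous form of the inequality $\dot V\ge 0$ must therefore be stated through the viscosity sub/super-differentials of $\brsvf_\OI^-$, or extracted directly from the dynamic programming principle for \eqref{eq:brs_pde}, and one must verify that the switching feedback admits well-defined trajectories that remain on the safe side of the barrier. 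Carefully handling this nonsmoothness --- not the level-set bookkeeping --- is where the real work lies.
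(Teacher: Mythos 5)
The paper never actually proves Proposition~\ref{prop:minimal}: the statement is followed immediately by discussion of its consequences, with the justification implicitly deferred to standard HJ reachability results (in the spirit of \cite{Mitchell05, Fisac15}). So there is no paper proof to compare against line by line; your barrier/semipermeability argument is indeed the standard route such a proof would take, and your closing remarks about viscosity-solution regularity and Filippov solutions correctly locate where the technical work lies. However, two specific steps in your write-up do not hold up. First, your identity $\dot V = D_t\brsvf_\OI^- + \max_{\ctrl_\OI\in\cset_\OI}\nabla\brsvf_\OI^-\cdot\vdyn = D_t\brsvf_\OI^- + \ham_\OI^-$ ignores the case structure of the Hamiltonian: inside $\bar\PCS_{\OI j}(t)$, $\ham_\OI^-$ is evaluated at the \emph{forced} pairwise-avoidance control $\ctrl_\OI^{APC}(\state_{\OI j}(t))$, not at the unconstrained maximizer, precisely because $\brs_\OI^-$ is defined over the restricted control class $\cfset_\OI^\text{SC}$. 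The conclusion survives, but only as an inequality ($\dot V \ge D_t\brsvf_\OI^- + \ham_\OI^- = 0$, since the max dominates the forced-control term), and this needs to be said; moreover, applying $\ctrl_\OI^-$ rather than $\ctrl_\OI^{APC}$ on a boundary point lying in $\bar\PCS_{\OI j}(t)$ means the closed-loop control leaves $\cfset_\OI^\text{SC}$, which matters for the framework's larger safety claim even if not for the literal set-avoidance statement.

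Second, and more seriously, your final contradiction argument fails at the phrase ``in particular the continuation of \eqref{eq:two_conf_ctrl} does.'' The universal quantifier in \eqref{eq:brs_min} ranges only over controls in $\cfset_\OI^\text{SC}$, i.e., controls complying with \eqref{eq:one_conf_ctrl}; but \eqref{eq:two_conf_ctrl} permits arbitrary elements of $\cset_\OI$ away from the boundary and prescribes $\ctrl_\OI^-$ on it, so nothing guarantees the closed-loop control belongs to $\cfset_\OI^\text{SC}$, and the definition of $\brs_\OI^-(\tau)$ therefore says nothing about the closed-loop trajectory from $\state_\OI(\tau)$. The contradiction does not follow. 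Relatedly, the barrier argument yields $\brsvf_\OI^-(\state_\OI(t),t)\ge 0$, whereas $\state_\OI(t)\notin\brs_\OI^-(t)=\{\brsvf_\OI^-\le 0\}$ requires strict positivity: semipermeability alone does not exclude trajectories that touch or slide along the zero level set. This open-versus-closed distinction is exactly the sort of point the paper glosses over by omitting a proof, but since your attempt explicitly tries to close it, the failure of that step is a genuine gap rather than a stylistic shortcut; the honest fix is either to weaken the conclusion to $\brsvf_\OI^-(\state_\OI(t),t)\ge 0$ (non-crossing), or to restrict the off-boundary branch of \eqref{eq:two_conf_ctrl} to controls consistent with \eqref{eq:one_conf_ctrl} so that the dynamic-programming appeal becomes legitimate.
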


Proposition \ref{prop:minimal} not only provides a safety-preserving controller for $\veh_\OI$. Along with the definition of $\brsvf_\OI^-(\state_\OI, t)$, but also represents a conservative guaranteed-safe region for $\veh_\OI$, given the avoidance maneuvers of $\veh_i, i \in \Nveh$. Depending on the system dynamics and computation power available, if $\brsvf_\OI^-(\state_\OI, t)$ is computable in real-time, then $\brsvf_\OI^-(\state_\OI, t)$ can be used to indicate a ``fenced off'' region that $\veh_\OI$ must avoid in order to ensure the safety of the $(N+1)$-vehicle system.

\subsubsection{Fast Safety Verification}
\label{sec:fast_ver}
When online computation resources are scarce, then we can consider the following BRS instead.

\begin{defn}
    \textbf{Maximal BRS from $\our_\OI$.}
    \begin{equation}
    \label{eq:brs_max}
    \begin{aligned}
    &\brs_\OI^+(t) = \{y: \exists \ctrl_\OI(\cdot), \exists s \in [t, T_r], \\
    &\quad\state_\OI(\cdot) \text{ satisfies \eqref{eq:vdyn} }, \state_\OI(s) = y \in \our_\OI(s)\}
    \end{aligned}
    \end{equation}
\end{defn}

Note that since the only difference between $\brs_\OI^-(t)$ and $\brs_\OI^+(t)$ is the quantifier in front of $\ctrl_\OI$, we have $\brs_\OI^-(t) \subseteq \brs_\OI^+(t)$, and therefore staying out of $\brs_\OI^+(t)$ is sufficient for guaranteeing safety.

Although computing $\brs_\OI^+(t)$ in general is just as difficult as computing $\brs_\OI^-(t)$, checking whether a state $\state_\OI$ is in $\brs_\OI^+(t)$ can be done much faster, without actually computing $\brs_\OI^+(t)$. This is achieved through the following FRS in $t \in [0, T_r]$:

\begin{defn}
    \label{defn:frs}
    \textbf{Forward reachable set from $\state_\OI$.}
    \begin{equation}
    \label{eq:frs}
    \begin{aligned}
    &\frs_\OI(t)  = \{y: \exists \ctrl_\OI(\cdot), \state_\OI(\cdot) \text{ satisfies \eqref{eq:vdyn}}, \\ &\quad\state_\OI(0) = \state_{\OI 0}, \state_i(t) = y \}
    \end{aligned}
    \end{equation}
\end{defn}

%
%
This FRS can be computed offline assuming that $\state_{\OI 0}$ is at the origin. Vehicle dynamics usually do not depend on the position of the vehicle. In this case, the FRS can be simply transformed to the coordinate system in which the vehicle is at the origin of the state space. Hence, no significant online computation is required. We now state the equivalence of checking the membership of $\state_{\OI 0}$ in the BRS $\brs_\OI^+(0)$ and checking membership of $\state_{\OI 0}$ in the FRS $\frs_\OI(t), t \in [0, T_r]$.

\begin{prop}
    \label{prop:frs_brs}
    \begin{equation}
    \state_\OI(0) \in \brs_\OI^+(0) \Leftrightarrow \exists t \in [0, T_r], \frs_\OI(t) \cap \our_\OI(t) \neq \emptyset
    \end{equation}
\end{prop}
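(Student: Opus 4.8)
The plan is to recognize that the proposition is a definitional equivalence: both $\state_\OI(0)\in\brs_\OI^+(0)$ and the existence of $t\in[0,T_r]$ with $\frs_\OI(t)\cap\our_\OI(t)\neq\emptyset$ are two encodings of the single assertion that some admissible trajectory starting from $\state_\OI(0)=\state_{\OI 0}$ meets the time-varying target $\our_\OI$ somewhere on $[0,T_r]$. Accordingly I would introduce the common ``meeting condition''
\begin{equation*}
(\star)\quad \exists\,\ctrl_\OI(\cdot),\ \exists\,t\in[0,T_r]:\ \state_\OI(\cdot)\text{ satisfies \eqref{eq:vdyn}},\ \state_\OI(0)=\state_{\OI 0},\ \state_\OI(t)\in\our_\OI(t)
\end{equation*}
and prove the biconditional by showing that each side is equivalent to $(\star)$; transitivity then closes the argument. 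No limiting or approximation step is needed, since the bridging object in each direction is one concrete trajectory reused verbatim.

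For the direction $(\Leftarrow)$, suppose $\frs_\OI(t)\cap\our_\OI(t)\neq\emptyset$ for some $t\in[0,T_r]$, and pick $y$ in the intersection. By Definition \ref{defn:frs} of the FRS, $y\in\frs_\OI(t)$ supplies a control $\ctrl_\OI(\cdot)$ and a trajectory with $\state_\OI(0)=\state_{\OI 0}$ and $\state_\OI(t)=y$; since $y\in\our_\OI(t)$, this control together with the reaching time $s=t$ witnesses $\state_\OI(0)\in\brs_\OI^+(0)$ directly from \eqref{eq:brs_max}. For $(\Rightarrow)$, suppose $\state_\OI(0)\in\brs_\OI^+(0)$; by \eqref{eq:brs_max} there is a control $\ctrl_\OI(\cdot)$ and a time $s\in[0,T_r]$ so that the trajectory from $\state_{\OI 0}$ satisfies $\state_\OI(s)\in\our_\OI(s)$. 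Setting $y=\state_\OI(s)$, the same control shows $y\in\frs_\OI(s)$ by Definition \ref{defn:frs}, while $y\in\our_\OI(s)$ by construction, so $\frs_\OI(s)\cap\our_\OI(s)\neq\emptyset$ with $t=s$.

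The step requiring the most care is the bookkeeping of quantifiers, not any analysis. The equivalence hinges on the control being quantified \emph{existentially} in both the maximal BRS \eqref{eq:brs_max} and the FRS \eqref{eq:frs}; this is precisely why the analogous statement fails for the minimal BRS $\brs_\OI^-$, whose control is universally quantified over $\cfset_\OI^\text{SC}$, so the witness extracted in one direction would not transfer. It also relies on both definitions sharing the same initial condition $\state_\OI(0)=\state_{\OI 0}$ and the same horizon $[0,T_r]$, so I would make the identification $\state_\OI(0)=\state_{\OI 0}$ explicit at the outset. Identifying ``the trajectory from the FRS'' with ``the trajectory from the BRS'' is legitimate because, under the Lipschitz and boundedness hypotheses on $\vdyn$ stated after \eqref{eq:rdyn}, a fixed measurable control and initial condition determine a unique solution of \eqref{eq:vdyn}; hence the witnessing trajectory produced by one definition is exactly the one satisfying the other. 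Finally I would underscore the practical payoff motivating the result: since $\frs_\OI$ may be precomputed offline and merely translated to $\state_{\OI 0}$ online, this equivalence replaces the expensive BRS membership test on the left by the cheap reachable-set intersection check on the right.
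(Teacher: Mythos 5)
Your proof is correct and takes essentially the same route as the paper: both arguments reduce the two sides to the single existential ``meeting condition'' by unwinding the definitions of $\brs_\OI^+$ and $\frs_\OI$ and exploiting the fact that all quantifiers are existential (the paper presents this as a chain of equivalences with a quantifier swap, whereas you split it into two explicit witness-passing directions, which is only a presentational difference). Your added remarks on trajectory uniqueness under the Lipschitz hypothesis and on why the argument fails for the universally quantified $\brs_\OI^-$ are sound refinements beyond what the paper states, but the core argument is the same.
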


\begin{proof} 

Suppose $\state_\OI(0) \in \brs_\OI^+(0)$, then equivalently
\begin{equation}
\exists \ctrl_\OI(\cdot), \exists t \in [0, T_r], \state_\OI(\cdot) \text{ satisfies \eqref{eq:vdyn}}, \state_\OI(t) = y \in \our_\OI(t)
\end{equation}
Since all quantifiers are existential, we equivalently have
\begin{equation}
\exists t \in [0, T_r], \exists \ctrl_\OI(\cdot), \state_\OI(\cdot) \text{ satisfies \eqref{eq:vdyn}}, \state_\OI(t) = y \in \our_\OI(t)
\end{equation}
By Definition \ref{defn:frs}, we in turn equivalently have
\begin{equation}
\exists t \in [0, T_r], y \in \frs_\OI(t)
\end{equation}
But we also have $y \in \our_\OI(t)$. Therefore, equivalently, $\frs_\OI(t) \cap \our_\OI(t) \neq \emptyset$.
\end{proof}

\subsection{Hybrid Framework}
\label{sec:hybrid}
Based on the reachability analysis of outsider safety, we propose a hybrid framework that determines control for the $N+1$ vehicles depending on the conflict size. Our proposed framework provides several stages of safety checking. The framework is depicted in Fig. \ref{fig:main_hybrid}.

Note that all the reachable sets are either defined in the relative coordinates between two vehicles, or in the absolute coordinate with respect to one vehicle, and thus the system dimensionality remains low, allowing for tractable HJ reachability computations. In addition, with the exception of $\brs_\OI^-(t)$, the minimal BRS from $\our_\OI(t)$, all computations can be done offline and the resulting reachable sets can be stored as lookup tables. This way, we do not incur significant online computational burden.

\begin{figure}
\centering
\includegraphics[width=\columnwidth,trim={0.1cm 0.1cm 0.1cm 0.1cm},clip]{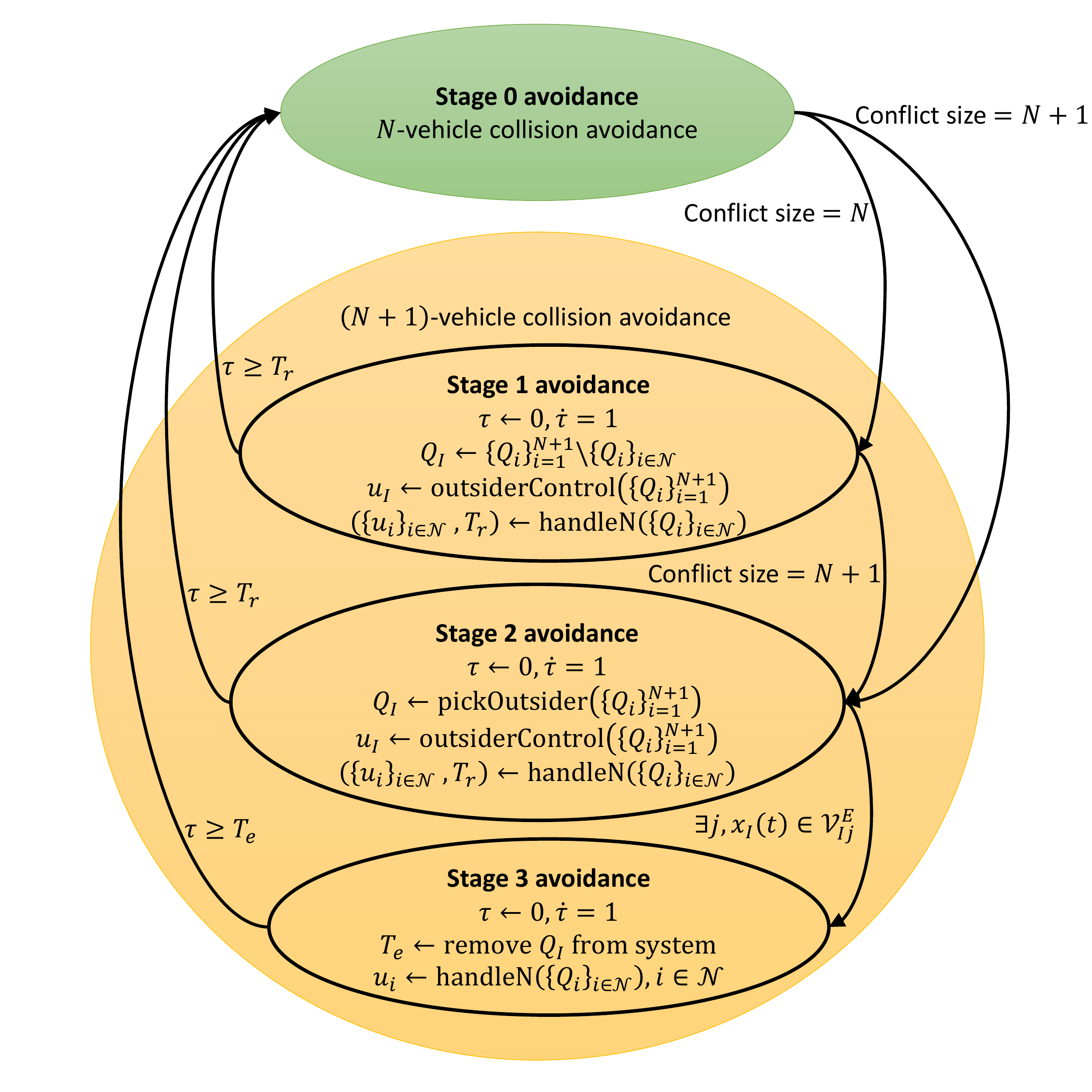}
\caption{Summary of Hybrid Framework. The stages of avoidance are explained in Section \ref{sec:hybrid}.}
\label{fig:main_hybrid}
\end{figure}
\subsubsection{Stage 0 Avoidance}
When the conflict size is less than $N$, the vehicles in potential conflict can be managed by the $N$-vehicle collision avoidance algorithm, and all vehicles are guaranteed safe. 

\subsubsection{Stage 1 Avoidance}
When the conflict size is $N$, we have a clear outsider vehicle $\veh_\OI$ that is not in conflict with other vehicles $\veh_i, i\in\Nveh$. Our hybrid controller preemptively attempts to keep $\veh_\OI$ from being in potential conflict with more than one of the other $N$ vehicles during $t \in [0, T_r]$. Algorithm \ref{alg:outsider_vehicle} provides a safety controller strategy for $\veh_\OI$. 

\subsubsection{Stage 2 Avoidance}

During Stage 2 avoidance, the size of conflict is $N+1$. To attempt to ensure safety, we determine whether there exists a vehicle that can be designated as $\veh_\OI$, such that it is guaranteed to only need to avoid one of the other $N$ vehicles for a duration of $T_r$. Algorithm \ref{alg:pickOutsider} outlines the process of designating a vehicle as the outsider vehicle in detail. After the outsider is designated, the safety control for $\veh_\OI$ is given according to Algorithm \ref{alg:outsider_vehicle}. 

If $\veh_\OI$ does enter $\our_\OI(t)$ during $t \in [0, T_r]$, no guarantees on avoiding the $T_e$-buffer sets can be made; however, $\veh_\OI$ may be able to avoid multiple vehicles with the same control. If at time $t$ before $T_r$, $\state_\OI(t) \in \brs_{\OI j}^{E}$, then we enter Stage 3 avoidance. Else, at $T_r$, we enter Stage 0.

\subsubsection{Stage 3 Avoidance}

$\veh_\OI$ is removed from the system, and the total number of vehicles is reduced to $N$, allowing the $N$-vehicle collision avoidance algorithm to maintain safety. The removal of the vehicle can be done by, for example, exiting the altitude range.

\subsection{Outsider Vehicle Control Strategy}
\label{sec:outCtrl}
\begin{algorithm}
\begin{algorithmic} 
	\caption{$\mathsf{outsiderControl}(\veh_{1}, \ldots, \veh_{N+1})$}
    \label{alg:outCtrl}
	\label{alg:outsider_vehicle}
	\STATE totalVehicleSet = $\{\veh_{1}, \ldots, \veh_{N+1}\}$
	\STATE nVehicleSet =  totalVehicleSet - $\veh_\OI$
	\STATE $[T_r, NTraj]$ = handleN(nVehicleSet)
	\STATE $\our_\OI$ = computeUnsafeRegions($T_r$, NTraj)
	\IF{$\our_\OI \cap \frs_\OI(t) == \emptyset\ \forall t \in [0, T_r]$}
		\STATE Return $\ctrl_\OI(\state_\OI, t)$ from \eqref{eq:one_conf_ctrl}
	\ELSE
		\STATE $\brs_\OI^-(T_r)$ = getBRS($\our_\OI$)  
		\STATE Return $\ctrl_\OI(\state_\OI, t)$ from \eqref{eq:two_conf_ctrl}	
	\ENDIF	
\end{algorithmic}
\end{algorithm}

The $N+1$ collision avoidance algorithm is guaranteed to be safe if $\veh_\OI$ is guaranteed to be able to avoid $\our_\OI(t)$. To determine $\our_\OI(t)$, the trajectories $\state_i(t), i \in \Nveh, t\in [0, T_r]$ are needed. We assume that these trajectories along with $T_r$ are given by the $N$-vehicle collision avoidance algorithm, which we call ``handleN()'' in Algorithm \ref{alg:outCtrl}.

Algorithm \ref{alg:outCtrl} first tries the fast safety verification method outlined in Section \ref{sec:fast_ver} to check if $\veh_\OI$ is guaranteed to avoid $\our_\OI(t)$. If $\veh_\OI$ is outside the maximal BRS, $\brs_\OI^+(T_r)~\forall t\in [0, T_r]$, then $\veh_\OI$ is guaranteed safe. As proved earlier, this can be efficiently checked using the FRS: If $\frs_\OI(t)$ does not intersect with $\our_\OI(t) ~ \forall t \in [0, T_r]$, then $\veh_\OI$ is guaranteed to be safe. By the definition of $\frs_\OI(t)$, regardless of $\veh_\OI$'s actions, it will not need to avoid more than one vehicle in the set $\{\veh_i\}, i \in \Nveh$. In this case, \eqref{eq:one_conf_ctrl} specifies the control strategy for $\veh_\OI$. 

If there is an intersection between $\frs_\OI(t)$ and $\our_\OI(t)$ for some $t \in [0, T_r]$, we can still check if $\veh_\OI$ is outside the minimal BRS $\brs_\OI^-(0)$ from $\our_\OI(t)$. $\brs_\OI^-(0)$ specifies the set of states outside of which $\veh_\OI$ is guaranteed to have a control strategy to not enter $\our_\OI(t)$. If $\state_\OI(0) \notin \brs_\OI^-(0)$, then \eqref{eq:two_conf_ctrl} specifies the safety-preserving control strategy for $\veh_\OI$. 

If $\state_\OI(t) \in \brs_\OI^-(0)$, then $\veh_\OI$ is not guaranteed to avoid $\our_\OI(t)$. In this case, it may be possible for $\veh_\OI$ to enter a $T_e$-buffer zone, in which case $\veh_\OI$ will be safely removed from the system.

\subsection{Designating a Suitable Outsider}

If the system enters Stage 2 avoidance, a vehicle needs to be designated as the outsider vehicle $\veh_\OI$. This is achieved by iterating through each vehicle $\veh_i$ where $i\in \{1 \dotsc {N+1}\}$ and determining whether safety guarantees can be made if $\veh_i$ is the outsider vehicle. Algorithm \ref{alg:pickOutsider} describes this process in detail.

\begin{algorithm}
\begin{algorithmic}
	\caption{$\mathsf{pickOutsider}(\veh_{1}, \ldots, \veh_{N+1})$   \label{alg:pick_outsider}}
	\label{alg:pickOutsider}
	\STATE totalVehicleSet = $\{\veh_{1}, \ldots, \veh_{N+1}\}$
	\STATE leastConflictVehicles = getLeastConflictVehicles()
	\FOR{$\veh_\OI \in \text{leastConflictVehicles}$}
		\STATE nVehicleSet =  totalVehicleSet - $\veh_\OI$
		\STATE $[T_r, NTraj]$ = handleN(nVehicleSet)
		\STATE $\our_\OI$ = computeUnsafeRegions($T_r$, NTraj)
		\IF{$\our_\OI \cap \frs_\OI(t) == \emptyset\ \forall t \in [0, T_r]$}
			\STATE Return $\veh_\OI$
		\ELSE
			\STATE $\brs_\OI^-(T_r)$ = getBRS($\our_\OI$) 
			\IF{$\veh_\OI\ \not \in\ \brs_\OI^-(T_r)$} 
				\STATE Return $\veh_\OI$	
			\ENDIF					
		\ENDIF	
	\ENDFOR
	\STATE Return any $\veh_i, i\in \{1 \dotsc {N+1}\}$	
\end{algorithmic}
\end{algorithm}

While picking a suitable outsider vehicle, we want to have the $N$ vehicle collision avoidance algorithm resolve as many conflicts as possible. Therefore, we select the vehicle with the least conflicts with the other vehicles. After the exclusion of this vehicle, the $N$-vehicle collision avoidance algorithm can resolve the conflicts of the rest of the $N$ vehicles.

Since we have chosen the outsider vehicle to be the one with the least number of conflicts, the remaining $N$ vehicles will still have a conflict size of at least one, and the $N$-vehicle collision avoidance algorithm would then reduce this conflict size by at least $1$ within a duration of $T_r$.

\subsection{Communication Scheme}
The $N+1$ collision avoidance algorithm does not require a centralized communication architecture. For each possible outsider $\veh_\OI$, as long as the trajectory of each of the $\veh_i$ where $i \in \Nveh$ is broadcast, $\veh_\OI$ can determine if safety is guaranteed. Once an outsider $\veh_\OI$ is determined, its control strategy can be determined with the broadcast trajectories. Note that a protocol needs to be established to avoid assigning multiple outsider vehicles; for example, the first vehicle to declare itself as an outsider could be chosen, as long as it is a valid choice according to Algorithm \ref{alg:pick_outsider}.


\section{Safety Guarantee}
\label{sec:proofs}

In the previous section, we proposed a hybrid framework that allows us to detect whether the system of $N+1$ vehicles is guaranteed to be safe. We now formally state this guarantee and prove the result.

\begin{thm}
\label{thm:main_result}
Suppose there are $N$ vehicles that enter into potential conflict at time $t=0$. Denote the outsider vehicle as $\veh_\OI$. If $\veh_\OI$ does not enter $\our_\OI(t)$ for $t \in [0, T_r]$, then we are guaranteed safety for $N+1$ vehicles for a duration of $T_r$.
\end{thm}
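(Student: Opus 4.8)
The plan is to reduce the $(N+1)$-vehicle safety assertion to a collection of pairwise statements and then close each pair with the invariance property of the potential-conflict BRS. Recall that safety means no pair enters its danger zone, which here is set equal to the $T_e$-buffer set, so it suffices to show that for every $i\neq j$ the relative state $\state_{ij}(t)\notin\brs_{ij}^E(T_e)$ on $[0,T_r]$. The $N$ vehicles indexed by $\Nveh$ are, by assumption, driven by the $N$-vehicle collision avoidance algorithm, which (as used in the proof of Proposition \ref{prop:np1}) already guarantees that none of them enters another's $T_e$-buffer set over $[0,T_r]$. Hence the only pairs left to handle are those between the outsider $\veh_\OI$ and each $\veh_j$, $j\in\Nveh$, and it is enough to prove that $\state_{\OI j}(t)\notin\brs_{\OI j}^E(T_e)$ for every such $j$ and every $t\in[0,T_r]$.

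First I would convert the hypothesis $\state_\OI(t)\notin\our_\OI(t)$ into a pointwise-in-time statement about the control. By definition \eqref{eq:our}, $\our_\OI(t)=\bigcup_{i\neq j}\bigl(\bar\PCS_{\OI i}(t)\cap\bar\PCS_{\OI j}(t)\bigr)$, so avoiding $\our_\OI(t)$ means that at each instant the outsider belongs to \emph{at most one} of the closed potential-conflict sets $\bar\PCS_{\OI j}(t)$. Consequently the switching rule \eqref{eq:one_conf_ctrl} is single-valued along the trajectory: whenever the outsider is in conflict with some $\veh_j$, the applied input is unambiguously the pairwise avoidance control $\ctrl_\OI^{APC}(\state_{\OI j}(t))$, i.e. $\ctrl_\OI(\cdot)\in\cfset_\OI^\text{SC}$, and there is never a clash between two competing avoidance controls.

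The core step is a barrier argument for one fixed pair $j\in\Nveh$. Since $\dz_{\OI j}=\brs_{\OI j}^E(T_e)$ sits inside the potential-conflict region, the relative state can reach the buffer set only while $\state_\OI\in\bar\PCS_{\OI j}(t)$; outside that region no avoidance is required because $\veh_\OI$ is guaranteed able to avoid $\dz_{\OI j}$ by the defining property of $\PCS_{\OI j}$ in Definition \ref{defn:pot_conf}. Throughout $\bar\PCS_{\OI j}(t)$ the rule applies the optimal avoidance control $\ctrl_\OI^{APC}$ from \eqref{eq:ctrl_CA}, which renders the barrier $\partial\brs_{\OI j}^{PC}$ invariant and thereby keeps $\state_{\OI j}$ from crossing into $\dz_{\OI j}$ (this is precisely the reachability guarantee of Definition \ref{defn:pot_conf}, following \cite{Mitchell05}). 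The only way this could fail is if the outsider were forced to apply an avoidance control aimed at some $\veh_k$, $k\neq j$, while still inside $\bar\PCS_{\OI j}$; but that would require $\state_\OI\in\bar\PCS_{\OI j}(t)\cap\bar\PCS_{\OI k}(t)\subseteq\our_\OI(t)$, contradicting the hypothesis. Hence the control needed to keep $\veh_\OI$ clear of $\veh_j$ is never pre-empted, and $\veh_\OI$ never enters $\veh_j$'s $T_e$-buffer set.

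Since $j\in\Nveh$ was arbitrary, $\veh_\OI$ avoids the $T_e$-buffer set of every one of the $N$ vehicles on $[0,T_r]$; combined with the intra-$\Nveh$ guarantee from the $N$-vehicle algorithm, every pair of the $N+1$ vehicles avoids its danger zone over $[0,T_r]$, which is exactly the claimed safety (and is the mechanism behind Proposition \ref{prop:np1}). I expect the main obstacle to be making the barrier step fully rigorous: one must verify that the single-conflict invariance property of $\brs_{\OI j}^{PC}$ survives both the \emph{time-varying} target $\our_\OI(t)$ and the \emph{intermittent} application of $\ctrl_\OI^{APC}$ as the active conflict index changes along the trajectory — that is, that invariance of each pairwise barrier is preserved across the switches guaranteed by the no-$\our_\OI$ condition.
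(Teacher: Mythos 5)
Your proof is correct and takes essentially the same approach as the paper's: the $N$-vehicle algorithm handles all pairs within $\Nveh$, and the hypothesis that $\veh_\OI$ avoids $\our_\OI(t)$ ensures the outsider is in at most one potential conflict at a time, so the pairwise avoidance controller guaranteed by Definition \ref{defn:pot_conf} is never pre-empted. Your write-up is in fact a more careful rendering of the paper's own informal argument, including the switching/invariance caveat that the paper glosses over.
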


\begin{proof}
To prove our theorem, it suffices to prove the following points: 
  
The set $\PCSOIJ$ refers to the locations of potential conflict between  $\veh_\OI$ and $\veh_j$. If we assume that $\veh_j$ is part of the N vehicle collision avoidance algorithm, then our safety controller is assumed to only guarantee safety when  $\veh_\OI$ can avoid one vehicle at time. Therefore, if $\veh_\OI$ is in the potential conflict set $\PCSOIJ$,  $\veh_\OI$ must avoid $\veh_{j}$ since $\veh_{j}$ is occupied with the $N$-vehicle collision avoidance algorithm. 

If $\veh_\OI$ is in the $P_{\OI j}$ and $P_{\OI a}$, it is in potential conflict with $\veh_{j}$ and $\veh_{a}$. If  $\veh_\OI$ is able to avoid both for $T_r$, then perhaps  $\veh_\OI$ can be guaranteed safety from both $\veh_{j}$ and $\veh_{a}$. However, it is not always guaranteed that $\veh_\OI$ can avoid both $\veh_{j}$ and $\veh_{a}$ with the same control. The set $\our_\OI(t)$ for $t \in [0, T_r]$ is the set of locations where $\veh_\OI$ will be in potential conflict with more than $1$ vehicle. Therefore, we can guarantee safety if $\veh_\OI$ does not enter $\our_\OI(t)$ for $t \in [0, T_r]$ because $\veh_\OI$ is guaranteed to be able to avoid at most one vehicle at a time. Note that we have effectively computed a conservative approximation of a very high-dimensional reachable set by determining unsafe regions.
\end{proof}

\begin{thm}
If $\veh_\OI$ is in $\our_\OI(t)$ for some $t \in [0, T_r]$, $\veh_\OI$ may still be able to avoid the other vehicles as long as it does not enter the $\brs_{ij}^E$ of any $\veh_{j}$. If $\veh_\OI$ enters $\brs_{\OI j}^E$ of any $\veh_{j}$, $\veh_\OI$ is forced to exit altitude range and is guaranteed to not collide with another vehicle. 
\end{thm}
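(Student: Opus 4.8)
The plan is to treat the statement as two separate claims and to anchor both in the defining property of the $T_e$-buffer set $\brs_{ij}^E$. The key observation I would establish first is a careful reading of the quantifiers in the definition of $\brs_{ij}^E$: since it is the set of relative states from which the true danger zone is reachable within $T_e$ for \emph{some} controls of both vehicles, its complement is exactly the set of relative states from which the danger zone cannot be reached within $T_e$ \emph{regardless} of either vehicle's control. Writing $\tau(y)$ for the minimum time-to-collision over all admissible control pairs, this reads $\brs_{ij}^E = \{y : \tau(y) \le T_e\}$ with boundary $\partial\brs_{ij}^E = \{y : \tau(y) = T_e\}$. This time-to-collision interpretation is the lemma I would lean on throughout, and in particular it guarantees that the collision time achieved under \emph{any} escape control of $\veh_\OI$ against the \emph{worst} control of $\veh_j$ is bounded below by $\tau(y)$.

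For the first claim I would argue almost directly from definitions. If $\veh_\OI \in \our_\OI(t)$ for some $t \in [0,T_r]$, then by \eqref{eq:our} it lies in at least two potential-conflict sets simultaneously, so the single-conflict control family $\cfset_\OI^\text{SC}$ no longer carries any guarantee. However, since each $\dz_{ij}$ has been specified to be the $T_e$-buffer set, avoiding collision is by construction equivalent to staying out of every $\brs_{ij}^E$. Hence as long as $\veh_\OI$ does not enter any $\brs_{\OI j}^E$ it remains safe; it \emph{may} still manage this even while in multi-conflict (for instance when the two required avoidance controls happen to coincide), though no guarantee is offered. This part is essentially tautological once the buffer set is recognized as the effective danger zone.

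For the second claim I would proceed in order. First, suppose $\veh_\OI$ enters $\brs_{\OI j}^E$ for some $j$; by the time-to-collision lemma, at the entry point the minimum time to reach the true danger zone $\dz_{\OI j}$ is at least $T_e$, uniformly over the controls of both $\veh_\OI$ and $\veh_j$. Second, I would invoke the standing modeling assumption from Section~\ref{sec:goals} that within the exit time $T_e$ a vehicle can remove itself from the system (e.g.\ by leaving the altitude range); this places the completion of the removal strictly inside the $T_e$ window that the buffer just opened, hence before any physical collision. Third, once $\veh_\OI$ has exited the altitude range it occupies a region of the joint state space disjoint from that of the remaining $N$ vehicles, so it cannot collide with any of them; simultaneously the conflict size drops to $N$, returning the system to Stage~0 where the $N$-vehicle algorithm maintains safety among $\Nveh$. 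Combining these yields the guarantee that $\veh_\OI$ does not collide with another vehicle.

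The hard part will be bridging ``$\tau = T_e$ at the entry point'' and ``the removal completes before any collision.'' The apparent difficulty of multiple simultaneous conflicts I would neutralize by a first-entry argument: at the first instant $\veh_\OI$ enters any buffer set it is on the boundary of that set and outside every other buffer set (otherwise it would have entered one earlier and triggered escape), so $\tau_{\OI j} \ge T_e$ holds simultaneously for every $j$. The genuine residual obstacle is then the modeling assumption itself: I must make explicit that the vertical escape maneuver both completes within the $T_e$ window and remains collision-free along its entire path, and that $T_e$ is chosen conservatively enough to underwrite this. Since this rests on the standing assumption about $T_e$ rather than on a reachability computation, the crux is stating that dependence cleanly and confirming that the existential-quantifier structure of $\brs_{ij}^E$ is read as a genuine worst-case (minimum) time-to-collision rather than a best-case one.
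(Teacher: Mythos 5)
Your proposal is correct and follows essentially the same route as the paper's own proof: both parts rest on (a) the observation that membership in $\our_\OI(t)$ only voids the single-conflict guarantee rather than forcing a collision, since one control may happen to avoid both vehicles, and (b) the defining existential-quantifier property of the $T_e$-buffer set, which ensures that upon first entering $\brs_{\OI j}^E$ no collision can occur for at least $T_e$ under any controls, which by the standing exit-time assumption suffices for $\veh_\OI$ to remove itself safely. Your version merely makes explicit (via the worst-case time-to-collision reading and the first-entry boundary argument) what the paper leaves implicit in the phrase ``by definition of $\brs_{ij}^E$.''
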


\begin{proof}
For a particular configuration, if there does not exist a $\veh_\OI$ that can avoid $\brs_\OI^-(t)$ for all $t \in [0, T_r]$, then we are not guaranteed to have a control strategy where $N+1$ vehicles are guaranteed safe without forcing a vehicle to exit. 

However, even if we are unable to find an ideal vehicle $\veh_\OI$ that can avoid $\brs_\OI^-(t)$ for all $t \in [0, T_r]$, we can still employ a vehicle to be our $\veh_\OI$. At some point t, this $\veh_\OI$ will be in potential conflict with two vehicles, let's call them $\veh_{j}$ and $\veh_{k}$. It is possible that the avoidance control could avoid $\veh_{j}$ and $\veh_{k}$ at the same time. As long as $\veh_\OI$ does not enter $\brs_{\OI j}^E$ or $\brs_{\OI k}^E$, $\veh_\OI$ is not required to be removed from the system. 

If $\veh_\OI$ enters $\brs_{\OI j}^E$ or $\brs_{\OI k}^E$, then it must be removed from the system. By definition of $\brs_{ij}^E$, as long as $\veh_\OI$ is able to be removed from the system within a duration of $T_e$, safety can be guaranteed.
\end{proof}

\section{Numerical Simulations}
\label{sec:sim}

In this section, we illustrate our method through simulations of a four-vehicle system, building our $N+1$ vehicle framework on top of the three-vehicle collision avoidance MIP scheme. We assume that the dynamics of each vehicle $\veh_i$ are identical and described by the Dubins car model:
\begin{equation}
\begin{aligned}
	\dot p_{x,i} = v \cos(\theta_i) \\
	\dot p_{y,i} = v \sin(\theta_i) \\
	\dot \theta_i = \omega_i, \vert \omega_i \vert \le \bar \omega
\end{aligned}
\end{equation}

The states $p_{x,i}, p_{y,i}, \theta_i$ represent the $x$, $y$, and $\theta$ coordinates respectively of vehicle $\veh_i$. The danger zone for HJ computation between $\veh_i$ and $\veh_j$ is defined as
\begin{equation}
    \mathcal{Z}_{ij} = \{x_ij : (p_{x,i} - p_{x,j})^2 + (p_{y,i} - p_{y,j})^2 \le R_c^2 \}
\end{equation}

$x_{ij}$ refers to the relative coordinates between $\veh_i$ and $\veh_j$: $x_{ij} = [p_{x,ij}, p_{y,ij}, p_{\theta,ij}]^\top = [p_{x,i} - p_{x,j}, p_{y,i} - p_{y,j}, \theta_i - \theta_j]^\top$.

In our experiment, each vehicle travels at a fixed speed of $v = 1$ and its maximum turn rate is $\bar \omega = 1$. The time for a vehicle to exit $T_e$ is set to 2;  the collision radius $R_c$ for each danger zone $\mathcal{Z}_{ij}$ is 3; and the safety threshold $K$ is 2.0.

To obtain the optimal pairwise safety controller, we compute the BRS $\brs^{PC}_{ij}(T)$ with relative dynamics
\begin{equation}
\begin{aligned}
\dot p_{x,ij} = -v + v \cos(\theta_{ij}) + \omega_i p_{y,ij} \\
\dot p_{y,ij} = v \sin(\theta_{ij}) - \omega_i p_{x,ij}\\
\dot \theta_{ij} = \omega_i - \omega_j, \text{ }\vert \omega_i \vert, \vert \omega_j \vert \le \bar \omega
\end{aligned}
\end{equation}

We present the simulation for Stage 1 and 2 in detail when number of vehicles $N=4$. Each vehicle aims to reach the circular target of matching color while avoiding the danger zones of the other vehicles. 

\begin{figure}[]
  \centering
  \begin{subfigure}[b]{0.49\columnwidth}
    \includegraphics[width=\textwidth]{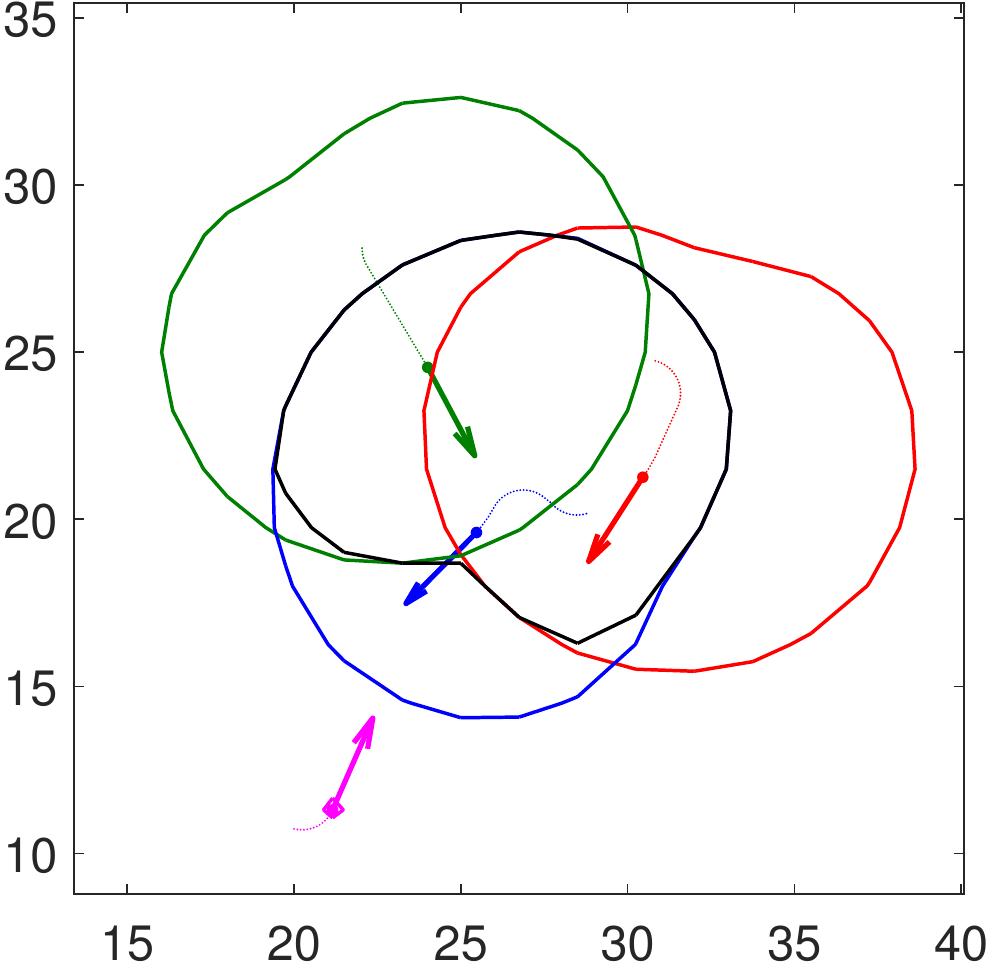}
  \end{subfigure}
  \begin{subfigure}[b]{0.49\columnwidth}
    \includegraphics[width=\textwidth]{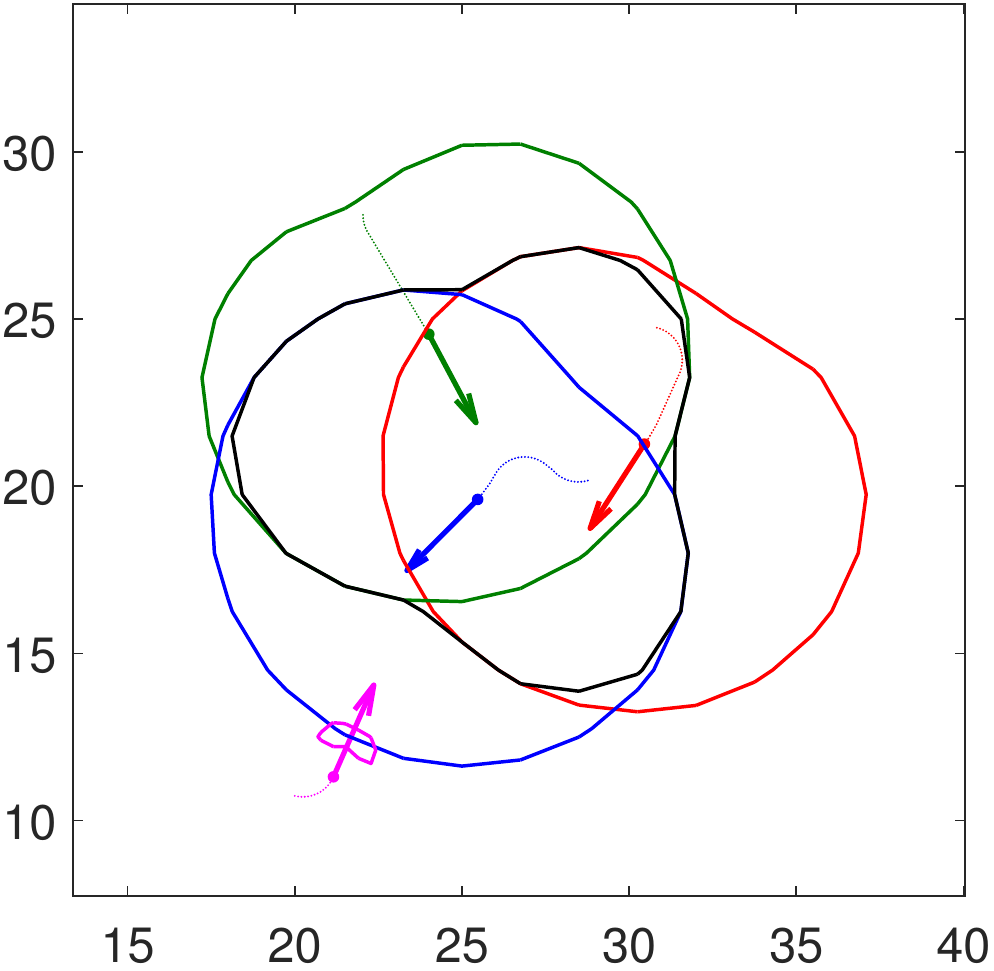}
  \end{subfigure}
  \caption{Stage 1: A scenario where the FRS of the outside vehicle (pink) does not intersect with the outsider unsafe region (black) for the entire time horizon that the rest of the three vehicles take to resolve their conflicts.}
  \label{fig:frs_nonoverlap}
\end{figure}

\begin{figure}[]
  \centering
  \includegraphics[width=0.5\columnwidth]{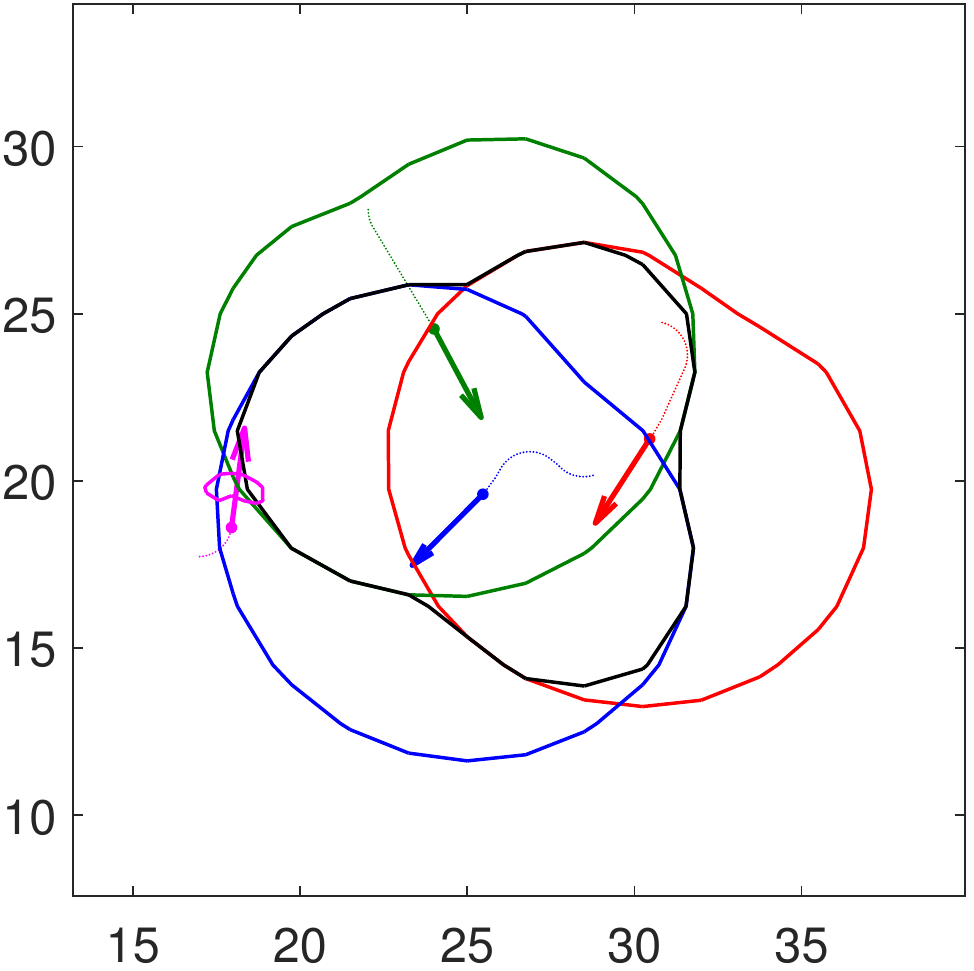}
  \caption{A stage 1 scenario where the FRS of the outside vehicle (pink) intersects with the outsider unsafe region (black).}
  \label{fig:frs_overlap}
\end{figure}

\begin{figure}[]
  \centering
  \includegraphics[width=0.5\columnwidth]{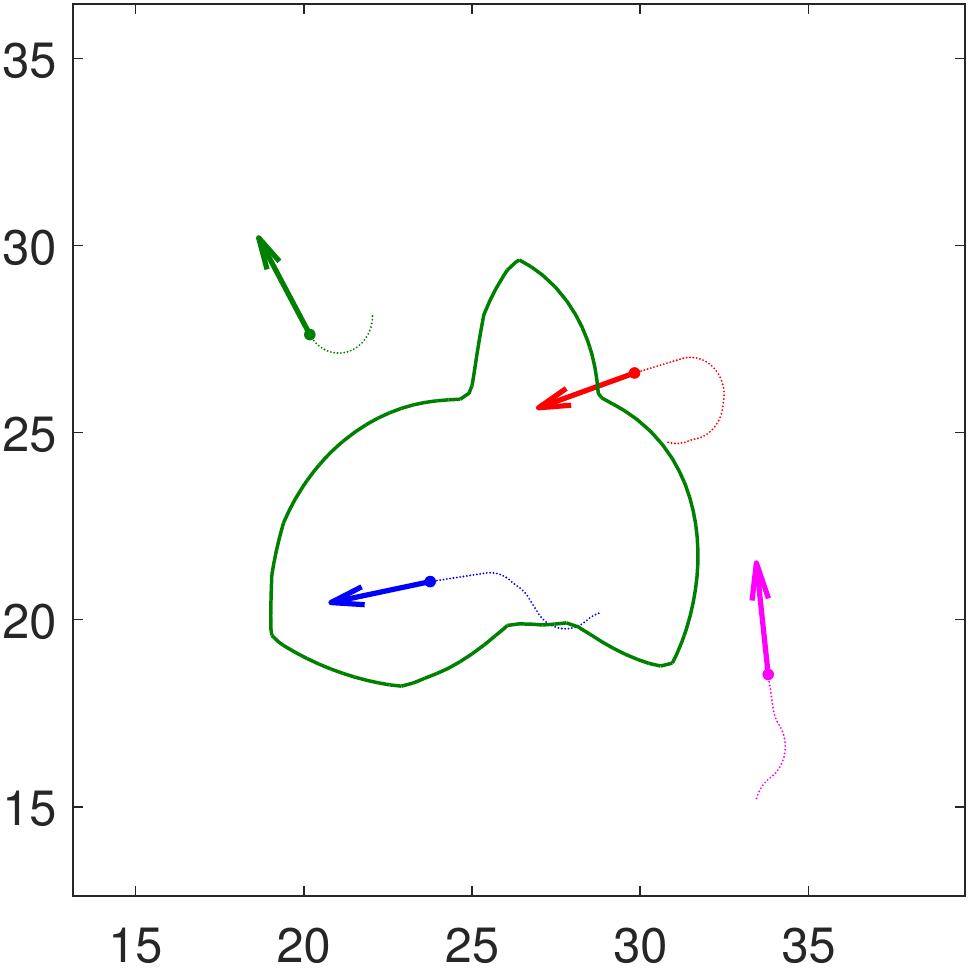}
  \caption{A stage 2 avoidance scenario in which the green vehicle is chosen as the outsider; the green boundary shows that it is outside of $\brs_\OI^-(t)$ and thus can avoid getting into multiple conflicts.}
  \label{fig:outsider_brs}
\end{figure}

In Fig. \ref{fig:frs_nonoverlap}, we illustrate the scenario where the FRS of the outsider vehicle (pink) does not overlap with the outsider unsafe region $\ref{eq:our}$ (black) in stage 1. The potential conflict zones are plotted for the three vehicles in their colors. In particular, we show two subplots illustrating the FRS and the outsider unsafe region at a time horizon of $T=2$ and $T=27$ respectively. In this case, the outsider vehicle is free to use the goal-satisfaction controller to go to its target while the rest of the three vehicles resolve their potential conflicts. In Fig. \ref{fig:frs_overlap}, we show a scenario where the FRS of the outsider vehicle intersects with the outsider unsafe region. In this case, we compute the BRS of this region and the outsider vehicle preemptively executes the optimal controller to avoid this region while the rest of the three vehicles resolve conflicts. Note that in this graph, we're taking the 2D projection of the reachable sets so it looks like the outsider vehicle is inside the potential conflict zone of another vehicle though the outsider vehicle is outside of the potential conflict zone in the state space.

Fig. \ref{fig:outsider_brs} shows a stage 2 scenario in which the green vehicle is chosen as the outsider. The green boundary represents the a 2D slice of the BRS $\brs_\OI^-(t)$ taken at the green vehicle's current heading, and contains the set of positions from which the green vehicle would inevitably enter the set $\our_\OI(t)$. Since the green vehicle is outside of $\brs_\OI^-(t)$, it is guaranteed to be able to avoid $\our_\OI(t)$, thereby avoid getting into multiple conflicts with the other three vehicles. Thus, if the green vehicle uses the controller $\ctrl_\OI^-(\state_\OI, t)$ in \eqref{eq:pairwiseCA_brs}, the entire conflict can be resolved, as shown in Fig. \ref{fig:sim}.

%

\begin{figure}
  \centering
  \begin{subfigure}[t]{0.45\columnwidth} \label{subfig:sim_1}
    \includegraphics[width=\columnwidth]{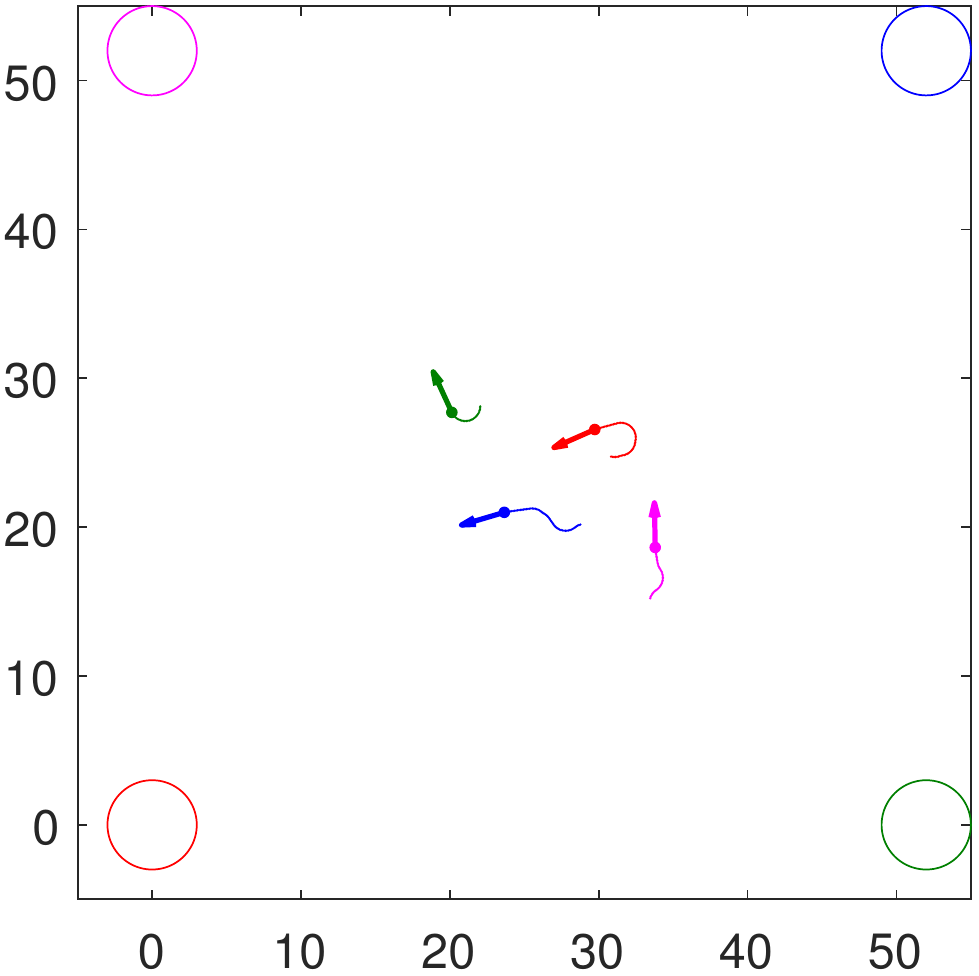}
    \caption{}
  \end{subfigure}
  \begin{subfigure}[t]{0.45\columnwidth} \label{subfig:sim_2}
    \includegraphics[width=\columnwidth]{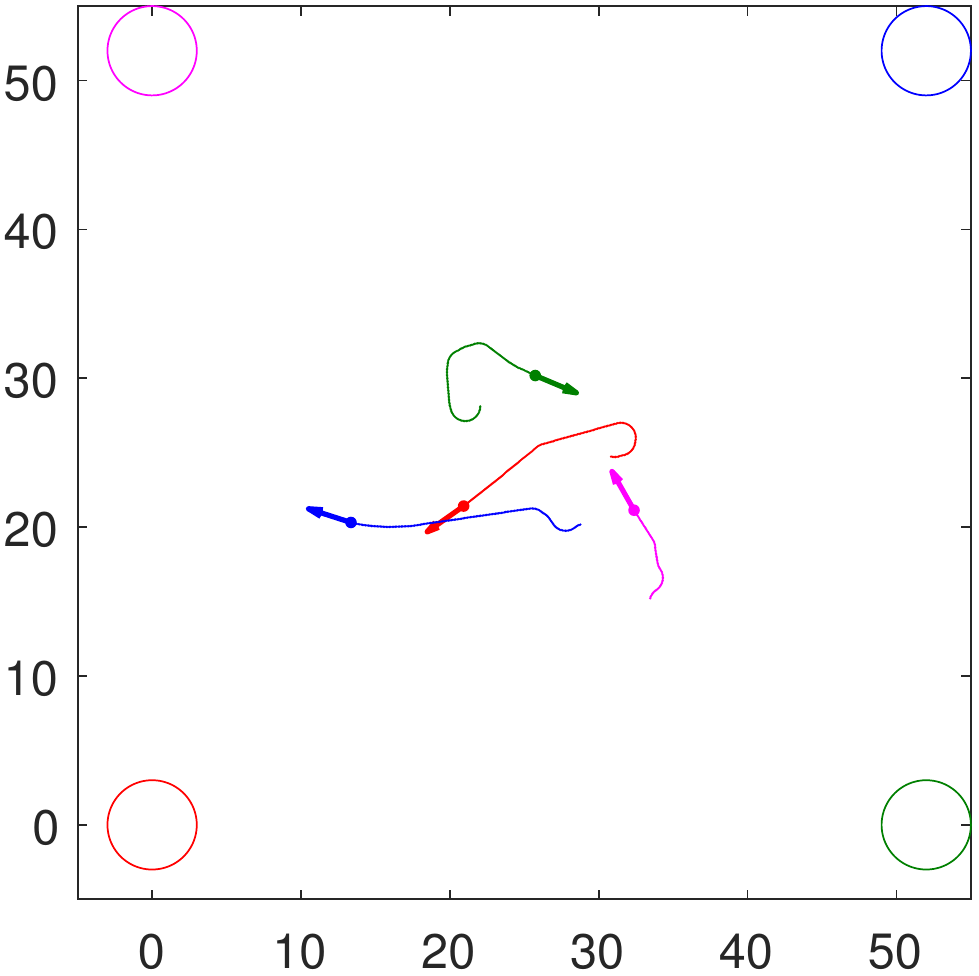}
    \caption{}
  \end{subfigure}
  
  \begin{subfigure}[t]{0.45\columnwidth} \label{subfig:sim_3}
    \includegraphics[width=\columnwidth]{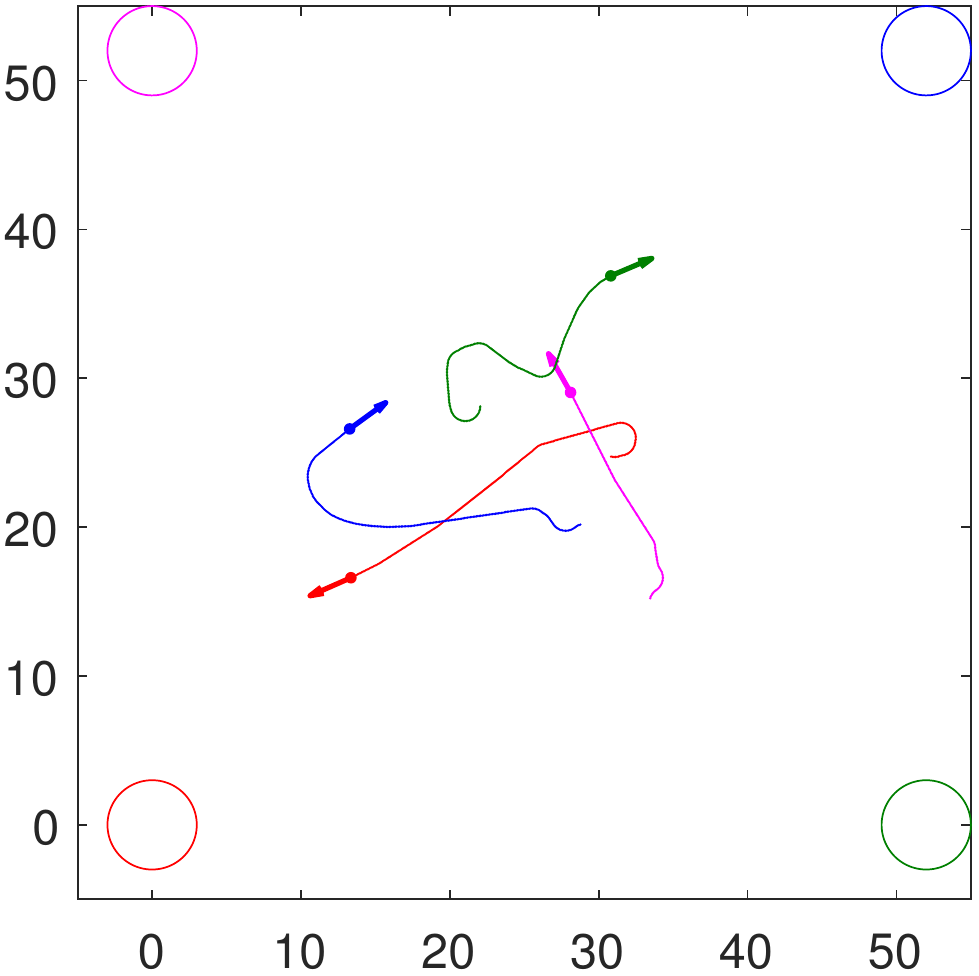}
    \caption{}
  \end{subfigure}
  \begin{subfigure}[t]{0.45\columnwidth} \label{subfig:sim_4}
    \includegraphics[width=\columnwidth]{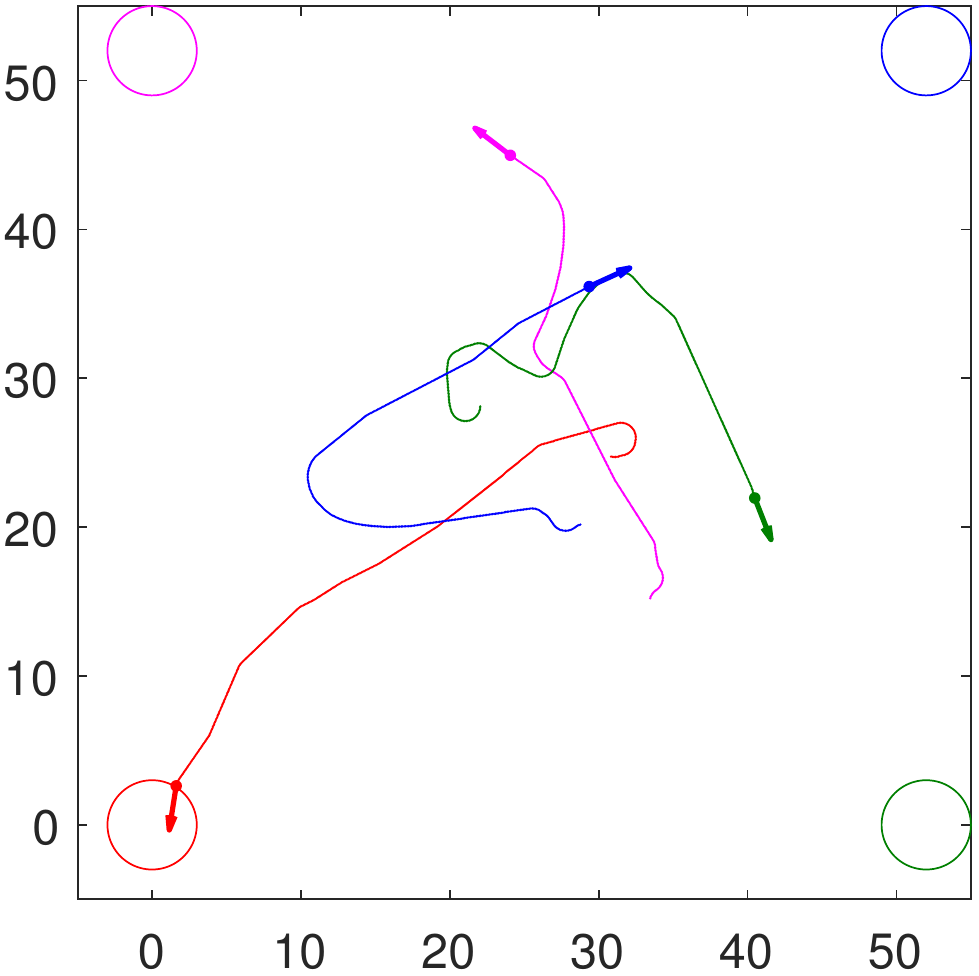}
    \caption{}
  \end{subfigure}   
  \caption{Snapshots in time of a four-vehicle simulation in which stage 2 avoidance occurs (the same simulation as that shown in Fig. \ref{fig:outsider_brs}). The green vehicle is chosen as the outsider and it does not interfere with the avoidance maneuvers of the other three vehicles. The colored circles represent the vehicles' destinations. \label{fig:sim}}
\end{figure}

The BRS of the $N$-vehicle potential conflict intersections is computed online. The other BRS and FRS can be computed offline and reused. Each BRS and FRS takes approximately 1 minute to compute. All computations were done on a MacBookPro 11.2 laptop with an Intel Core i7-4750 processor.


\section{Conclusions}
We considered the problem of $N+1$-vehicle collision avoidance using a hybrid framework. By exploiting properties of pairwise optimal collision avoidance and an existing $N$-vehicle collision algorithm, our proposed method conservatively approximates the unsafe region of any vehicle given the states of the other $N$ vehicles, and synthesizes a cooperative control strategy that guarantees collision avoidance of all $N+1$ vehicles whenever possible as determined by our hybrid framework. In the situation in which collision avoidance cannot be guaranteed for all $N+1$ vehicles, our proposed method still allows enough time for any vehicle to remove itself from the system. 

Immediate future work includes better preemptive avoidance to allow prediction of the unsafe regions so that collision avoidance can be guaranteed in all scenarios. Other future work includes investigating safety guarantees for a larger number of vehicles, and incorporating more collision avoidance algorithms into our proposed framework.



 \bibliographystyle{IEEEtran}
 \bibliography{references}
\end{document}